\newcommand{\bA}{{\mathbb A}}
\newcommand{\caD}{{\mathcal D}}
\newcommand{\caS}{{\mathcal S}}
\newcommand{\caT}{{\mathcal T}}
\newcommand{\integers}{{\mathbf Z}}
\newcommand{\complex}{{\mathbf C}}
\newcommand{\affine}{{\mathbf A}}
\newcommand{\MM}{{\mathsf M}}
\newcommand{\NN}{{\mathsf N}}
\newcommand{\CC}{{\mathsf C}}
\newcommand{\HH}{{\mathsf H}}
\newcommand{\EE}{{\mathsf E}}
\newcommand{\FF}{{\mathsf F}}
\renewcommand{\S}{{\mathsf S}}
\newcommand{\SH}{\mathsf{SH}}
\newcommand{\MU}{\mathsf{MU}}
\newcommand{\MGL}{\mathsf{MGL}}
\newcommand{\KGL}{\mathsf{KGL}}
\newcommand{\Spec}{\mathrm{Spec}}
\newcommand{\Hom}{\mathrm{Hom}}
\newcommand{\BAb}{\mathsf{BAb}}
\newcommand{\one}{\mathbf{1}}
\newcommand{\Top}{\mathrm{Top}}
\newcommand{\rig}{\mathrm{rig}}
\newcommand{\eff}{\mathrm{eff}}
\newcommand{\SlMGL}{\mathrm{(SlMGL)}}
\newtheorem{lem}{Lemma}[section]
\newtheorem{thm}[lem]{Theorem}
\newtheorem{cor}[lem]{Corollary}
\newtheorem{prop}[lem]{Proposition}
\newtheorem{rem}[lem]{Remark}
\begin{document}
\title{\bf Slices of motivic Landweber spectra}
\author{Markus Spitzweck}
\date{April 2009}
%\address{... \\ Department of Mathematics \\ University of Regensburg \\
%G$\ddot{\text{o}}$ttingen \\ Germany}
%\email{..., spitz@uni-math.gwdg.de}
%\keywords{}
%\subjclass{}

\maketitle

\tableofcontents

\section{Introduction}

In this paper we show that a Conjecture of Voevodsky
concerning the slices of the motivic Thom spectrum
$\MGL$ \cite{voe-slice} implies a general statement about slices
of motivic Landweber spectra.

A similar result is announced by Hopkins-Morel.

A proof of Voevodsky's conjecture, to the
author's knowledge over fields
of characterisic $0$, is likewise announced by Hopkins-Morel.

In \cite{levine-htp} Levine gives an unconditional computation
of the slices of the algebraic $K$-theory spectrum
$\KGL$ over perfect fields
yielding (shifted) motivic Eilenberg-MacLane spectra.

In \cite{voe-slice} it is suggested that a Conner-Floyd type
isomorphism $$\KGL_{**}(X) \cong \MGL_{**}(X) \otimes_{\MU_*}
\integers[u,u^{-1}]$$
for homotopy algebraic $K$-theory
could yield a proof of the conjectures on the slices of
$\KGL$ in \cite{voe-slice} assuming the conjectures about $\MGL$.

Using the Conner-Floyd isomorphism for homotopy algebraic $K$-theory
established in \cite{oestvaer-spitzweck} and in \cite{PPR3}
over fields in a slightly different form
our result gives a positive answer to the strategy suggested
in \cite{voe-slice}.

We point out that Voevodsky's full Conjecture
uses the motivic Eilenberg-MacLane spectrum,
in particular the conjecture says that the zeroth slice
of $\MGL$ is the motivic Eilenberg-MacLane spectrum.

In our discussion we formulate the conjecture for all slices of
$\MGL$ relative to the zeroth slice.

The zeroth slice of the sphere spectrum is known
to be motivic cohomology over perfect fields (see \cite{voevodsky-zero-slice}
for fields of characteristic $0$ and \cite{levine-htp} for perfect fields).
By \cite[Corollary 1.3]{spitzweck-rel} the zeroth' slices of
the sphere spectrum and $\MGL$ agree.

The proof of the main result consists of two steps.
In the first we show that the statement holds for Landweber exact
spectra of the form $\MGL \wedge \EE$, $\EE$ also a Landweber
spectrum. The main ingredient is a topological result about
the projective dimension of the $\MU$-homology of an even
topological Landweber spectrum, \cite{hov-strick.mor}.

In the second step we use a cosimplicial resolution of the
given Landweber spectrum in terms of spectra of the form
from the first step.

In a last paragraph we show that the argument used here also
shows that cohomological Landweber exactness holds for all compact spectra,
not only for the strongly dualizable ones as shown in \cite{NOS-land}.

\vskip.4cm

{\bf Acknowledgements:} I would like to thank Niko Naumann, Paul Arne
{\O}stv{\ae}r,
Marc Levine and Fabien Morel for helpful
discussions on the subject.

\section{Conventions}

We fix a base scheme $S$ (Noetherian of finite Krull dimension)
and denote the stable $\bA^1$-homotopy category over $S$ by
$\SH(S)$. As in \cite{NOS-land} we let $\SH(S)_\caT$ be the full localizing
triangulated category spanned by all Tate spheres $S^{p,q}$ which is also known
as cellular spectra.

We let $\one$ be the motivic sphere spectrum.

Generalizing the notion of rigid homotopy groups
of a spectrum $\EE$ given in \cite{voe-slice}
(i.e. $\pi^\rig_{p,q}(\EE)=\pi_{p,q} s_q \EE$)
we set $\pi^\rig_{p,q}(X_\bullet)= \pi_{p,q}(X_q)$
for an object
$X_\bullet \in \SH(S)^\integers$.

We set $\pi^\rig_p(X_\bullet) := \pi^\rig_{2p,p}(X_\bullet)$.

\section{Preliminaries on the slice filtration}

As in \cite{voe-slice} we denote the
slices of a spectrum $X \in \SH(S)$ by $s_i(X) \in \SH(S)$.

The functor
$$s_* \colon \SH(S) \to \SH(S)^\integers$$
has good multiplicative properties, for a general
treatment of that using the theory of model categories see \cite{pelaez}.
It is in particular shown in loc. cit. that the functor $s_*$
preserves ring and module objects in a highly structured sense.
For most of the paper we use these statements
on the level of homotopy, see \cite[page iv, (5)]{pelaez}.

In \cite[p. 5]{voe-slice} it is observed that there are
natural maps $s_i(\EE) \wedge s_j(\FF) \to s_{i+j}(\EE \wedge \FF)$
(the map in loc. cit. is written after taking sums over all $i$ resp. $j$).
Assembling these maps in a graded way gives natural maps
in $\SH(S)^\integers$
$$\alpha_{\EE,\FF} \colon s_*(\EE) \wedge s_*(\FF) \to s_*(\EE \wedge \FF),$$
where the $\wedge$-product in $\SH(S)^\integers$ is defined using
the $\wedge$-product in $\SH(S)$ and taking sums of $\wedge$-products
of a fixed total degree.

Indeed the $\alpha_{\EE,\FF}$ assemble to give $s_*$ the structure
of a lax tensor functor by the following argument:

The slice  $s_0(\EE)$ for an effective spectrum $\EE \in \SH(S)^\eff$
can be obtained by a left Bousfield localization
of the triangulated category $\SH(S)^\eff$ along the subcategory
$\Sigma_T \SH(S)^\eff$, see \cite{pelaez} for a model category version of this.
In detail the functor $s_0$ restricted to effective objects is the composition
$$\SH(S)^\eff \to \SH(S)^\eff/\Sigma_T \SH(S)^\eff \to \SH(S)^\eff,$$
where the first arrow is the quotient map and the second arrow
the right adjoint to the quotient map which exhibits the quotient
as a full subcategory of the first category. For the existence of the
quotients see e.g. \cite[par. 5.6]{krause}.
Now $\Sigma_T \SH(S)^\eff$ is a tensor ideal of $\SH(S)^\eff$,
therefore there is an induced $\wedge$-product on the quotient
$\SH(S)^\eff/\Sigma_T \SH(S)^\eff$ and the quotient map
$\SH(S)^\eff \to \SH(S)^\eff/\Sigma_T \SH(S)^\eff$ is a tensor functor.
Thus the right adjoint is a lax tensor functor, which gives
$s_0 \colon \SH(S)^\eff \to \SH(S)^\eff$ the structure
of a lax tensor functor. By applying suitable shifts
$\Sigma_T^i$ this construction gives the functor
$s_* \colon \SH(S) \to \SH(S)^\integers$ the structure of
a lax tensor functor.

In the whole paper we will denote the spectrum
$s_0(\MGL)$ by $\HH$. By the above it is a ring spectrum and
using the effectivity of $\MGL$ (\cite[Corollary 3.2]{spitzweck-rel})
it comes with a ring map $\MGL \to \HH$.

We make the following assumption,
which is called $\SlMGL$:

$s_i(\MGL) \cong \Sigma_T^i \HH \otimes \pi_{2i}(\MU)$
in $\SH(S)$
compatible with the homomorphism $\MU_* \to \MGL_{**}$
as in \cite[Conjecture 5]{voe-slice}.

The assumption implies that
shifted slices $\Sigma^{0,-i} s_i \MM$
of a cellular $\MGL$-module $\MM$ are in the localizing
triangulated subcategory of $\SH(S)$
generated by $\HH$. We call such spectra {\em strictly $\HH$-cellular}.
We call a module $X_\bullet \in \SH(S)^\integers$
stricly $\HH$-cellular if for all $i$ the module
$\Sigma^{0,-i} X_i$ is strictly $\HH$-cellular.

\section{Remarks on phantom maps}

Throughout the paper we will use the notion of {\em phantom map}.
We recall that in a compactly generated triangulated category with sums
a map between two objects is called
{\em phantom} if it induces the zero map between the represented
cohomology theories on compact objects.

If the triangulated category has a compatible tensor product
and if every compact object is strongly dualizable then this is the same
that the corresponding homology transformation on the whole category,
or equivalently on the compact objects, is zero.

This is the case e.g. for the categories $\SH(S)_\caT$, $\SH(S)_\caD$
(the last category is spanned by strongly dualizable objects, see \cite[par. 4]{NOS-land}).

Let $f \colon T \to S$ be a map between base schemes. 
Let $g$ be the right adjoint to the pullback functor $f^* \colon \SH(S)_\caT \to \SH(T)_\caT$.
Then $g$ is a $\SH(S)_\caT$-module
functor (compare \cite[Lemma 4.7]{NOS-land}). Let $F \colon \EE \to \FF$ be a phantom map
in $\SH(S)_\caT$. Then $g$ applied to $f^*\EE \wedge K \to f^* \FF \wedge K$
yields $\EE \wedge g(K) \to \FF \wedge g(K)$. It follows that $f^* \colon \SH(S)_\caT \to \SH(T)_\caT$
preserves phantom maps. A similar argument shows that $\SH(S)_\caT \hookrightarrow \SH(S)_\caD$
preserves phantoms.

Let $F$ be as above. One can also see that for a smooth $S$-scheme $X$ the transformation
$\Hom([X],F)$ is zero. It is not clear to the author if $F$ is necessarily phantom in $\SH(S)$.

\section{Landweber spectra}

We recall briefly some results from
\cite{NOS-land} which we shall need in this paper.

The motivic Thom spectrum $\MGL$
is a commutative monoid in $\SH(S)$.
By the construction of \cite[2.1]{PPR2} there is a strictly
commutative model as symmetric $T$-spectrum,
$T$ the Tate object $\affine^1/(\affine^1 \setminus \{0\})$.

We let $\BAb$ be the abelian category of bigraded abelian
groups.

For a Landweber exact $\MU_*$-module $M_*$ (which we always consider
to be evenly graded in the usual topological grading, but we adopt
the convention that we regrade by dividing by $2$)
one looks at the functor
$$\begin{array}{lll}
\SH(S) & \to & \BAb \\
X & \mapsto & \MGL_{**}(X) \otimes_{\MU_*} M_*.
\end{array}$$
Here $\MU_*$ and $M_*$ are considered as bigraded
(more precisely Adams graded graded) abelian groups
via the diagonal $\integers(2,1)$
(for more precise statements see \cite{NOS-land}).
By the results of \cite{NOS-land} this functor
is a homology theory on $\SH(S)$ and
representable by a cellular (or Tate-) spectrum $\EE$.
There is a choice of that spectrum which is canonical up to isomorphism
(which is canonical up to a possible phantom map in Tate-spectra)
by requiring that $\EE$ is the pullback of a Tate-spectrum representing
the same theory over the integers.

A refined version of this statement gives a representing
object as highly structured $\MGL$-module.

Let $\caD_\MGL$ be the derived category of (highly structured)
$\MGL$-modules. Then the functor
$$\begin{array}{lll}
\caD_\MGL & \to & \BAb \\
X & \mapsto & X_{**} \otimes_{\MU_*} M_*.
\end{array}$$
is a homology theory and representable by a cellular
$\MGL$-module.

We let $\caD_{\MGL,\caT}$ be the subcategory of cellular $\MGL$-modules.

\section{Slices of Landweber exact theories}

\begin{thm} \label{sl-thm}
Suppose $\SlMGL$ is fulfilled.
Let $M_*$ be a Landweber exact $\MU_*$-module
and let $\EE_\integers$ be the corresponding
Landweber exact motivic spectrum in $\SH(\integers)$
given by \cite[Theorem 9.7]{NOS-land}. Let
$\EE$ be its pullback to $S$. Then
$s_i(\EE) \cong \Sigma_T^i \HH \otimes M_i$ (here $M_i$ is the $2i$-th
homotopy group of the corresponding topological
Landweber spectrum) compatible with the
homomorphism $M_* \to \EE_{**}$.
\end{thm}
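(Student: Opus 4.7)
The plan is to follow the two-stage strategy indicated in the introduction. The organising principle is that by $\SlMGL$ the theorem holds tautologically for $\EE = \MGL$, and the lax tensor structure on $s_* \colon \SH(S) \to \SH(S)^\integers$, together with resolutions built out of $\MGL$, should propagate this to an arbitrary Landweber $\EE$. Throughout, the candidate answer $\Sigma_T^i \HH \otimes M_i$ is strictly $\HH$-cellular, so one aims to identify both sides inside the strictly $\HH$-cellular subcategory, where morphisms are controlled by $\HH$-homology.

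\textbf{Stage 1.} First handle spectra of the form $\MGL \wedge \FF$ with $\FF$ itself Landweber exact. Such a smash product is a cellular $\MGL$-module, and via the refined representability in $\caD_{\MGL,\caT}$ recalled in the previous section its underlying coefficient module is obtained from that of $\FF$ by an extension of scalars involving $\MU_*\MU$. The Hovey-Strickland-Morel bound on the projective dimension of the $\MU$-homology of an even topological Landweber spectrum then furnishes a short free resolution of the relevant $\MU_*$-module, which lifts via the $\caD_{\MGL,\caT}$-representability to a short resolution of $\MGL \wedge \FF$ by wedges of Tate shifts of $\MGL$. The assumption $\SlMGL$ computes the slices of each term of this resolution; assembling the resulting distinguished triangles, and using strict $\HH$-cellularity of the target to rule out non-trivial extensions, one reads off the formula for $s_i(\MGL \wedge \FF)$, compatibly with the ring map from $\MU_*$.

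\textbf{Stage 2, and the main obstacle.} For a general Landweber $\EE$ one resolves it by the cosimplicial object
$$\EE \to \MGL \wedge \EE \rightrightarrows \MGL \wedge \MGL \wedge \EE \to \cdots,$$
each term of which falls under stage 1 (since $\MGL^{\wedge n} \wedge \EE$ is $\MGL$ smashed with a Landweber spectrum). Totalising the corresponding cosimplicial object of coefficient modules recovers $M_*$ via the classical cobar resolution of the $\MU_*$-module $M_*$, so applying $s_*$ termwise yields a cosimplicial object of strictly $\HH$-cellular spectra whose totalisation should be the claimed description of $s_*(\EE)$. The principal difficulty, and what the preliminary discussion of phantom maps and the phantom-unique choice of Landweber representing objects are designed for, is the convergence of this totalisation and its compatibility with $s_*$: any discrepancy between $s_*$ of the totalisation and the totalisation of the $s_*$'s has to be absorbed into a phantom map, matching the phantom indeterminacy already present in the Landweber representing object of \cite{NOS-land}. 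Once this convergence-modulo-phantoms statement is in place the remaining bookkeeping, and in particular the compatibility with the coefficient map $M_* \to \EE_{**}$, is formal.
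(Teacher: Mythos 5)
Your Stage 1 is essentially the paper's argument: a two-term projective resolution of $M_*$ over $\MU_*$ (Hovey--Strickland), lifted to a cofiber triangle of cellular $\MGL_\integers$-modules, pulled back to $S$, with $\SlMGL$ identifying the slices of the two projective terms; the conclusion for the cofiber then follows. One small inaccuracy: there is no extension to rule out and strict $\HH$-cellularity of the target is not the lever --- the map $\psi_M$ into $s_*$ of the cofiber is constructed directly (via the weak universal property of $\HH\otimes A$ from Lemma \ref{tors-ten}) and is an isomorphism by the five lemma once $\psi_P,\psi_Q$ are, using torsion-freeness of $M_*$ to identify the cofiber of $s_*(\MGL)\otimes_{\MU_*}(P_*\to Q_*)$ as $s_*(\MGL)\otimes_{\MU_*}M_*$.

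In Stage 2 there is a real gap, and you have essentially flagged it yourself. You pose the problem as commuting $s_*$ with a totalisation of the cobar cosimplicial object and then worry about ``convergence modulo phantoms.'' That is not what the paper does, and as stated your reduction is incomplete: you say the convergence statement ``has to be absorbed into a phantom map'' and ``once this ... is in place'' the rest is formal, but you do not supply that statement, and in this generality totalisation of a cosimplicial diagram in a triangulated category is not something one can simply assert converges. The paper avoids the issue entirely. Because $\EE$ is an $\MGL$-module, the augmented cosimplicial object $\MGL^{\wedge\bullet}\wedge\EE$ admits an extra degeneracy: it extends from $\bigtriangleup$ to the pointed category $\bigtriangleup_*$, with $[-1]_*$ mapped to $\EE$, using the module structure map. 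After applying $s_j$ and passing to the represented cohomology theories on compact objects (which kills the phantom ambiguity), one gets a transformation of $\bigtriangleup_*$-diagrams of abelian-group-valued functors. For such split-augmented diagrams, Lemma \ref{cosimp-res} is an elementary split-equalizer statement in an arbitrary category: if the map is an isomorphism on all $[i]_*$ with $i\ge 0$ (which is Proposition \ref{spec-case}), it is automatically an isomorphism on $[-1]_*$. No homotopy limit, no $\lim^1$, no convergence hypothesis. The missing idea in your proposal is precisely this contraction coming from the module structure; without it, the cosimplicial approach you describe does not close.
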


In the above $\HH \otimes A$ for a torsion free abelian group
$A$ is the spectrum $\HH \otimes (\S^\Top \otimes A)$,
where the first $\otimes$ is the exterior action
of the stable topological homotopy category and
$\S^\Top \otimes A$ is the sphere spectrum with
$A$-coefficients, i.e. a spectrum representing
the homology theory $X \mapsto X_0 \otimes A$
on the topological stable homotopy category.
$\S^\Top \otimes A$ is well defined up
to possible phantom maps.

\begin{cor}
Suppose $\SlMGL$ is fulfilled. Then $s_i(\KGL) \cong
\Sigma_T^i \HH$ compatible with the natural map $\integers \to \pi_{2i,i}
\KGL$.
\end{cor}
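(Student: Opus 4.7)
The plan is to verify that $\KGL$ fits directly into Theorem \ref{sl-thm} and then read off the statement by computing $M_i$ for the $\MU_*$-module corresponding to $\KU_*$.

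First I would identify $\KGL$ with the Landweber spectrum produced by Theorem 9.7 of \cite{NOS-land}. Topologically, the Todd genus $\MU_* \to \KU_*$ presents $\KU_* = \integers[u,u^{-1}]$ (with $|u|=2$) as a Landweber exact $\MU_*$-module. Feeding $M_* = \integers[u,u^{-1}]$ into \cite[Theorem 9.7]{NOS-land} gives a cellular motivic spectrum $\EE_\integers \in \SH(\integers)$, and the identification of its pullback $\EE$ to $S$ with $\KGL$ is part of the construction of \cite{NOS-land}. In particular the tautological map $M_* \to \EE_{**}$ agrees with the natural map $\integers[u,u^{-1}] \to \KGL_{**}$ sending $u^i$ to the $i$-th power of the Bott element.

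Next I would apply Theorem \ref{sl-thm} to $\EE = \KGL$. Under the regrading convention of dividing the topological grading by $2$, the Adams-graded piece $M_i$ is the topological degree $2i$ part of $\KU_*$, i.e. the rank-one free abelian group generated by $u^i$. Since $\HH \otimes \integers = \HH$, the formula $s_i(\EE) \cong \Sigma_T^i \HH \otimes M_i$ from the theorem specializes to $s_i(\KGL) \cong \Sigma_T^i \HH$. The compatibility clause of Theorem \ref{sl-thm} asserts this isomorphism is compatible with the map $M_* \to \KGL_{**}$, which, restricted to the rank-one summand $M_i = \integers \cdot u^i$, is precisely the map $\integers \to \pi_{2i,i}\KGL$ named in the corollary.

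No real obstacle arises once Theorem \ref{sl-thm} and the Landweber presentation of $\KGL$ are granted; the corollary is a direct specialization. The only point requiring care is the bookkeeping of the Adams regrading, which converts the topologically evenly-concentrated $\KU_*$ into a sequence of copies of $\integers$ indexed by $i \in \integers$, so that $M_i \otimes \HH$ really is a single copy of $\HH$ in each slice degree.
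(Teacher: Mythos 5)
Your proposal is correct and follows essentially the same route as the paper: identify $\KGL$ as the motivic Landweber spectrum for the $\MU_*$-module $\integers[u,u^{-1}]$ and then specialize Theorem \ref{sl-thm}, noting that each Adams-graded piece $M_i$ is a free rank-one abelian group so that $\HH \otimes M_i \cong \HH$. One small attribution point: the identification of $\KGL$ with the Landweber spectrum for $\integers[u,u^{-1}]$ is not merely ``part of the construction of \cite{NOS-land}''; it is the motivic Conner--Floyd isomorphism, which the paper cites as \cite[Theorem 1.2]{oestvaer-spitzweck}, and without that input one only knows that \emph{some} motivic spectrum represents the Landweber theory for $\integers[u,u^{-1}]$, not that it is $\KGL$.
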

\begin{proof}
The spectrum $\KGL$ is Landweber exact for the $\MGL$-algebra
$\integers[u,u^{-1}]$ classifying the mutliplicative formal
group law over $\integers[u,u^{-1}]$, see \cite[Theorem
1.2]{oestvaer-spitzweck}. The result follows from Theorem
\ref{sl-thm}.
\end{proof}

\begin{lem}
\label{tors-ten}
Let $R$ be a motivic ring spectrum (i.e. a commutative
monoid in $\SH(S)$), $A$ a torsion free abelian group, $M$ a
$R$-module and $\varphi \colon A \to \pi_{0,0} M$ a map.
Then there is a map $R \otimes A \to M$ which is an $R$-module
map and which induces $\varphi$
via $A \to \pi_{0,0} (R \otimes A) \to \pi_{0,0} M$. Moreover
it is well defined up to phantoms in $\SH(S)$.
\end{lem}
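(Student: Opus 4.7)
The plan is to reduce to the case where $A$ is finitely generated (hence free), where a map $R \otimes A \to M$ is built directly from a basis, and then to assemble the finite pieces via a Milnor-type exact sequence; the ambiguity lands in phantoms by a compactness argument.

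First, since $A$ is torsion free, I write $A = \operatorname{colim}_\alpha A_\alpha$ as a filtered colimit of its finitely generated subgroups, each $A_\alpha \cong \integers^{n_\alpha}$ being free. From the description of $\S^\Top \otimes -$ recalled immediately after Theorem \ref{sl-thm}, the spectrum $\S^\Top \otimes A$ is a filtered homotopy colimit of $\S^\Top \otimes A_\alpha \simeq (\S^\Top)^{\oplus n_\alpha}$, so $R \otimes A \simeq \operatorname{hocolim}_\alpha R^{\oplus n_\alpha}$ as an $R$-module. For each $\alpha$, the restriction $\varphi|_{A_\alpha}$ is a homomorphism from a finitely generated free abelian group to $\pi_{0,0} M$, which corresponds to an $R$-module map $f_\alpha \colon R \otimes A_\alpha \to M$. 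The family $\{f_\alpha\}$ is tautologically compatible with the transition maps, so it defines an element of $\lim_\alpha [R \otimes A_\alpha, M] \cong \Hom(A, \pi_{0,0} M)$ matching $\varphi$, where $[-,-]$ denotes morphisms of $R$-modules.

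Next, I invoke the Milnor short exact sequence associated with this homotopy colimit,
$$0 \to \lim\nolimits_\alpha^1 [\Sigma (R \otimes A_\alpha), M] \to [R \otimes A, M] \to \lim\nolimits_\alpha [R \otimes A_\alpha, M] \to 0,$$
to lift the compatible system to an $R$-module map $f \colon R \otimes A \to M$. By construction the induced composite $A \to \pi_{0,0}(R \otimes A) \to \pi_{0,0} M$ is $\varphi$, as required.

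The key remaining point is to check that any two lifts differ by a map phantom in $\SH(S)$. Their difference $g$ is an $R$-module map with $g|_{R \otimes A_\alpha} = 0$ for every $\alpha$. Given any compact $C \in \SH(S)$ and any morphism $u \colon C \to R \otimes A$ in $\SH(S)$ (not required to be $R$-linear), compactness of $C$ ensures that $[C,-]_{\SH(S)}$ commutes with the filtered homotopy colimit defining $R \otimes A$, so $u$ factors through some $R \otimes A_\alpha$; hence $g \circ u = 0$ and $g$ is phantom. The only delicate point is the Milnor/Bousfield-Kan formalism for a filtered rather than sequential colimit, which one handles either by replacing the system by a cofinal sequence or by accepting that any additional higher-derived-limit ambiguity is phantom by exactly the same compactness argument.
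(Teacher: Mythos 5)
Your approach is genuinely different from the paper's. The paper passes through topology: it uses the adjunction between $\SH$ and $\SH(S)$ to transport the universal property of the Moore spectrum $\S^\Top \otimes A$ to $\one \otimes A$, then tensors the resulting map $\one \otimes A \to M$ with $R$ and composes with the module action $R \wedge M \to M$. You instead work entirely inside $\SH(S)$, writing $A$ as a filtered colimit of finitely generated free subgroups. Your phantom/uniqueness argument via compactness is correct and is in essence the same observation the paper makes when it says the transformation is determined on cohomology theories on compacts; your version is more explicit.

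There is, however, a real gap in the existence step. The Milnor exact sequence you invoke is valid for \emph{sequential} homotopy colimits (indexed by $\omega$). For a filtered colimit indexed by a directed poset of uncountable cofinality, the edge map $[R \otimes A, M] \to \lim_\alpha [R \otimes A_\alpha, M]$ need not be surjective: in the Bousfield--Kan spectral sequence computing $\pi_0$ of the homotopy limit of mapping spectra, elements of $\lim^0 \pi_0$ can support nontrivial differentials into $\lim^s \pi_{s-1}$ for $s \geq 2$. Your first proposed fix, replacing the system by a cofinal sequence, only works when $A$ has countable rank, since the poset of finitely generated subgroups of an uncountable torsion-free group has no countable cofinal chain. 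Your second fix (``any additional higher-derived-limit ambiguity is phantom'') addresses the \emph{kernel} of the edge map, i.e.\ uniqueness, but not the failure of \emph{surjectivity} that obstructs existence. A clean patch, still avoiding the paper's detour through topology, is to pick a free presentation $0 \to F_1 \to F_0 \to A \to 0$ and model $R \otimes A$ as the cofiber of $\bigvee_{F_1} R \to \bigvee_{F_0} R$; given $\varphi$, lift it to $F_0 \to \pi_{0,0}M$, realize as an $R$-module map $\bigvee_{F_0} R \to M$, observe that its restriction to $\bigvee_{F_1} R$ vanishes since $[\bigvee_{F_1} R, M]_{R\text{-mod}} \cong \prod_{F_1} \pi_{0,0}M$, and extend over the cofiber. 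Your filtered-colimit compactness argument then supplies the phantom statement.
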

\begin{proof}
First note that $\one \otimes A$ has such a universal property
by using the adjunction $\SH \to \SH(S)$, $\SH$ the topological
stable homotopy category and the corresponding universal property
of $\S^\Top \otimes A$. Tensoring the resulting map
$\one \otimes A \to M$ with $R$ and composing with the module structure
map gives the required map. It is unique up to phantoms since
on the level of cohomology theories on compacts it is well defined.
\end{proof}

\subsection{Slices of Landweber spectra of the form $\MGL \wedge \EE$} \label{first-sl}

One idea is to use resolutions of the $\MU_*$module $M_*$
by free or projective $\MU_*$-modules. Let $M_*$ be the coefficients
of a Landweber spectrum $\MU \wedge \EE^\Top$ for $\EE^\Top$ also
Landweber.
Here we induce the $\MU_*$-module structure
from the first factor in $\MU \wedge \EE^\Top$.
We let $\EE_\integers$ be the $\MGL_\integers$-module
representing the theory for the module $\EE^\Top_*$.
Hence $\MGL_\integers \wedge \EE_\integers$
represents the theory corresponding to $M_*$.

By \cite[2.12 and 2.16]{hov-strick.mor} there exists a $2$-term resolution
of $M_*$ by projective $\MU_*$-modules
\begin{equation} \label{M-reso}
0 \to P_* \overset{\phi}{\to} Q_* \to M_* \to 0,
\end{equation}
where $P_*$ and $Q_*$ come by construction
as retracts of free $\MU_*$-modules
(see \cite[Lemma 4.6]{christensen-strickland} which is
cited in the proof of  \cite[2.14]{hov-strick.mor}), say of
$\bigoplus_i \MU_*(n_i)$ and $\bigoplus_j \MU_*(m_j)$.

As $\MU_*$-module $M_*$ is flat. We shall not need this fact in this paragraph,
it will become relevant in the last paragraph.

For any Landweber exact $\MU_*$-module $N_*$
(in particular for any projective $\MU_*$-module) we denote
by $h_{M_*}$ the corresponding homology theory
on $\caD_{\MGL_\integers}$ given by $X \mapsto (X_* \otimes_{\MU_*} N_*)_0$.
Any $\MU_*$-module map between such modules gives rise
to a transformation betweeen the homology theories.

Hence we get the sequence
\begin{equation} \label{short-exact-hom}
0 \to h_{P_*} \to h_{Q_*} \to h_{M_*} \to 0
\end{equation}
of homology theories.

This is short exact
since by the flatness of $M_*$ as quasi coherent sheaf over
the moduli stack of formal groups with trivialized constant vector fields for any
$X \in \caD_{\MGL_\integers}$ the map
$h_{P_*}(X) \to h_{Q_*}(X)$ is a mono (*).
Now lift $h_\phi \colon h_{P_*} \to h_{Q_*}$
to a map between cellular $\MGL_\integers$-modules $\Phi \colon \MM_P \to \MM_Q$.
($P_*$ and $Q_*$ are projective so this is easy,
one can also invoke that $\caD_{\MGL_\integers,\caT}$ is Brown.)

Let $\CC_\integers$ be the cofiber of $\Phi$. By (*)
the sequence of homology theories associated to
the exact triangle
\begin{equation} \label{aufl-tri}
\MM_P \to \MM_Q \to \CC_\integers \to \MM_P[1]
\end{equation}

is isomorphic to the sequence (\ref{short-exact-hom}),
in particular the homology theory associated to $\CC_\integers$ is canonically
isomorphic to $h_{M_*}$. Hence $\CC_\integers$
is isomorphic to $\MGL_\integers \wedge \EE_\integers$
since $\caD_{\MGL_\integers,_\caT}$ is Brown.

We now look at the triangle
\begin{equation}
s_*(\MM_{P,S}) \to s_*(\MM_{Q,S}) \to s_*(\CC_S) \to s_*(\MM_{P,S})[1]
\end{equation}
in $\SH(S)^\integers$, $\MM_{P,S}$, $\MM_{Q,S}$, $\CC_S$ the pullbacks
of
$\MM_P$, $\MM_Q$, $\CC_\integers$ to $S$.

Since we have maps $P_* \to \MM_{P,S,*}$, $Q_* \to \MM_{Q,S,*}$,
$M_* \to \CC_{S,*}$ we get maps
$$P_* \to \pi^\rig_* s_*(\MM_{P,S}),$$
likewise for $Q_*$ and $M_*$.
These are $\MU_*$-module maps ($s_* X$ has the
structure of an $s_*(\MGL)$-module, $X$ a $\MGL$-module).

For a $\MU_*$-module $N_*$ which is torsionfree
as abelian group we informally
denote by
$s_*(\MGL) \otimes_{\MU_*} N_*$ the module
in $\SH(S)^\integers$ which has
$\Sigma_T^q \HH \otimes N_q$ in the $q$-th component,
similarly for maps between such $\MU_*$-modules.
By Lemma (\ref{tors-ten}) the module $s_*(\MGL) \otimes_{\MU_*} N_*$ has the
weak universal property that for a given map
of $\MU_*$-modules $\phi \colon N_* \to \pi^\rig_* s_*(\NN')$, $\NN'$
a $\MGL$-module, there is an induced map
$$s_*(\MGL) \otimes_{\MU_*} N_* \to s_*(\NN'),$$
compatible with $\phi$ unique up to possible phantoms.

Thus
we get maps $$\psi_P \colon s_*(\MGL) \otimes_{\MU_*} P_* \to s_*(\MM_{P,S}),$$
similarly $\psi_Q$ and $\psi_M$ for $Q_*$ and $M_*$. The
maps $\psi_P$ and $\psi_Q$ are isomorphisms by the
assumption (SlMGL) and since $P_*$ and $Q_*$ are retracts
of free $\MU_*$-modules. Via these isomorphisms the map
$$s_*(\MM_{P,S}) \to s_*(\MM_{Q,S})$$ represents the map
$$s_*(\MGL) \otimes_{\MU_*} (P_* \to Q_*).$$ Now since $M_*$
is torsionfree the cofiber of the last map
is $s_*(\MGL) \otimes_{\MU_*} M_*$ (this is already so
for the cofibers of the maps $\S^\Top \otimes (P_q \to Q_q)$ in
the topological stable homotopy category).

This shows that the map $\psi_M \colon s_*(\MGL) \otimes_{\MU_*} M_*
\to s_*(\CC_S)$ is an isomorphism.
This is the content of the following proposition.

\begin{prop} \label{spec-case}
Theorem (\ref{sl-thm}) holds for Landweber spectra
of the form $\MGL \wedge \EE$ for $\EE$ Landweber.
\end{prop}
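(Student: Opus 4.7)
My plan is to reduce the computation of the slices of $\MGL \wedge \EE$ to those of $\MGL$ itself, which are controlled by $\SlMGL$. The strategy is to present $M_*$ as a cofiber, in the category of $\MU_*$-modules, between ``free-like'' pieces, lift that presentation to a cofiber triangle of cellular $\MGL_\integers$-modules, pull back to $S$, and then apply the slice functor to compare with the analogous triangle of $s_*(\MGL) \otimes_{\MU_*} (-)$.

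First I would invoke \cite[2.12, 2.16]{hov-strick.mor} to obtain a two-term resolution
\[
0 \to P_* \xrightarrow{\phi} Q_* \to M_* \to 0
\]
of $M_*$ by $\MU_*$-modules, where $P_*$ and $Q_*$ are retracts of free $\MU_*$-modules. Flatness of $M_*$ as a quasicoherent sheaf on the moduli stack of formal groups (with trivialized constant vector field) ensures that the induced sequence of homology theories on $\caD_{\MGL_\integers}$ is short exact. Brown representability for $\caD_{\MGL_\integers, \caT}$ then lifts $\phi$ to a map $\Phi \colon \MM_P \to \MM_Q$ between cellular $\MGL_\integers$-modules, and the cofiber $\CC_\integers$ represents the homology theory $h_{M_*}$, so another application of Brown identifies $\CC_\integers$ with $\MGL_\integers \wedge \EE_\integers$.

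Next I would pull back the triangle $\MM_P \to \MM_Q \to \CC_\integers$ to $S$ and apply $s_*$, producing a triangle in $\SH(S)^\integers$. Using the $\MU_*$-module maps $P_* \to \pi^\rig_* s_*(\MM_{P,S})$, $Q_* \to \pi^\rig_* s_*(\MM_{Q,S})$, and $M_* \to \pi^\rig_* s_*(\CC_S)$ together with Lemma \ref{tors-ten}, I would produce comparison maps $\psi_P$, $\psi_Q$, $\psi_M$ from $s_*(\MGL) \otimes_{\MU_*} (-)$ into the respective slices. By $\SlMGL$ the case $N_* = \MU_*$ gives an isomorphism, and since $P_*$ and $Q_*$ are retracts of finite sums of shifts of $\MU_*$, both $\psi_P$ and $\psi_Q$ are isomorphisms as well.

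Finally, torsion-freeness of $M_*$ makes the cofiber of $\S^\Top \otimes (P_q \to Q_q)$ compute to $\S^\Top \otimes M_q$ in each bigraded component; via the exterior action of the topological category on $\HH$, the cofiber of $s_*(\MGL) \otimes_{\MU_*} P_* \to s_*(\MGL) \otimes_{\MU_*} Q_*$ is therefore $s_*(\MGL) \otimes_{\MU_*} M_*$. Comparing with the slice triangle of the $\MM$'s then forces $\psi_M$ to be an isomorphism, which is the desired statement. The main subtlety will be compatibility with the structure map $M_* \to \EE_{**}$, since the maps produced by Lemma \ref{tors-ten} are only well-defined up to phantoms; the rescue is that at the level of the slice cofiber triangle this ambiguity is absorbed, because $\psi_M$ is pinned down up to phantoms by the triangle determined by the fixed lift $\Phi$ and by $\psi_P$, $\psi_Q$ being genuine isomorphisms.
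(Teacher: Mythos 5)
Your proof follows essentially the same route as the paper: the two-term projective resolution from Hovey--Strickland, lifting to a cofiber triangle of cellular $\MGL_\integers$-modules, pulling back and applying $s_*$, comparing via $\psi_P$, $\psi_Q$, $\psi_M$ using $\SlMGL$ and torsion-freeness of $M_*$, and handling the phantom ambiguity. This matches the paper's argument in Section 5.1 step for step.
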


\begin{rem}
Consider the base change of the boundary map $\CC_\integers \to M_P[1]$ of
the triangle (\ref{aufl-tri}) to the spectrum $S$ of a subfield
of $\complex$. It is phantom in $\SH(S)_\caT$ since the corresponding homology
theories yield a short exact sequence. In general it is non-trivial
since after topological realization we recover the original
sequence $P_* \to Q_* \to M_*$ as coefficients,
and $M_*$ is in general not projective.
\end{rem}

\section{Cosimplicial resolutions}
\label{cosimp-reso}

In this section we prove theorem (\ref{sl-thm}).

Let $\bigtriangleup$ be the simplicial category,
$\bigtriangleup_*$ the category of the ordered {\em pointed}
sets $[n]_* = \{0,\ldots,n \} \coprod \{*\}$ for $n \in \{-1,0,1, \ldots \}$
pointed by $*$ and order preserving pointed maps.
An extension of a cosimplicial diagram to $\bigtriangleup_*$
corresponds to a `contraction' to the value at $[-1]_*$.
For example the homotopy limit of a cosimplicial diagram
which is the restriction of a $\bigtriangleup_*$-diagram in
a model category is weakly equivalent to the value at $[-1]_*$.
We shall only need the following strict version of the assertion.

\begin{lem} \label{cosimp-res}
Let $\psi_\bullet \colon A_\bullet \to B_\bullet$ be a map between $\bigtriangleup_*$-diagrams
in a category. Suppose $\psi_\bullet$ is an isomorphism
on the objects $[i]_*$ of $\bigtriangleup_*$ for $i \ge 0$.
Then $\psi_{-1}$ is also an isomorphism.
\end{lem}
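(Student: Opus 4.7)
The plan is to exploit the fact that $[-1]_*$ is a retract of $[0]_*$ in $\bigtriangleup_*$, so that any functor out of $\bigtriangleup_*$ automatically sends $[-1]_*$ to a retract of the value at $[0]_*$, in a way natural enough that an isomorphism at $[0]_*$ forces an isomorphism at $[-1]_*$.

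First, I would exhibit the retraction explicitly. The unique pointed order-preserving map $\iota \colon [-1]_* \to [0]_*$ (sending $*$ to $*$) and the unique pointed order-preserving map $\rho \colon [0]_* \to [-1]_*$ (sending both $0$ and $*$ to $*$) satisfy $\rho \circ \iota = \mathrm{id}_{[-1]_*}$ in $\bigtriangleup_*$. Applying the covariant diagrams $A_\bullet$ and $B_\bullet$ gives maps $A(\iota),B(\iota),A(\rho),B(\rho)$ with $A(\rho) \circ A(\iota) = \mathrm{id}_{A_{-1}}$ and $B(\rho) \circ B(\iota) = \mathrm{id}_{B_{-1}}$, and the naturality of $\psi_\bullet$ yields the commutation relations
\[
B(\iota) \circ \psi_{-1} = \psi_0 \circ A(\iota), \qquad \psi_{-1} \circ A(\rho) = B(\rho) \circ \psi_0.
\]

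Next, using the hypothesis that $\psi_0$ is an isomorphism (this is the $i=0$ case, which is all we need), I would define a candidate inverse
\[
\phi := A(\rho) \circ \psi_0^{-1} \circ B(\iota) \colon B_{-1} \to A_{-1}.
\]
A short diagram chase using the two naturality squares and the retract identities then gives $\phi \circ \psi_{-1} = A(\rho) \circ A(\iota) = \mathrm{id}_{A_{-1}}$ and $\psi_{-1} \circ \phi = B(\rho) \circ B(\iota) = \mathrm{id}_{B_{-1}}$, so $\psi_{-1}$ is an isomorphism.

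There is essentially no obstacle: once one notices the splitting $\rho \circ \iota = \mathrm{id}_{[-1]_*}$, only the $[0]_*$-value of $\psi$ is used, and the remaining verifications are formal manipulations with the two naturality squares. The higher levels $[i]_*$ for $i \ge 1$ in the hypothesis are not even needed for this particular statement; they are presumably included because in the applications of the lemma it is just as easy to check isomorphy at all $i \ge 0$.
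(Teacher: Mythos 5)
Your proof is correct, but it takes a genuinely different (and leaner) route than the paper's. The paper introduces all of the maps induced by $[-1]_*\to[0]_*\rightrightarrows[1]_*$ together with the two retractions $[1]_*\to[0]_*$ and $[0]_*\to[-1]_*$, checks that these form a split equalizer, and concludes that $A_{-1}$ is the limit of $A_\bullet|_\bigtriangleup$ (and likewise for $B_\bullet$); the result then follows from the universal property of limits applied to the natural isomorphism $\psi_\bullet|_\bigtriangleup$. You instead use only the retraction $\rho\circ\iota=\mathrm{id}_{[-1]_*}$ to exhibit $\psi_{-1}$ as a retract of $\psi_0$ in the arrow category (the two naturality squares provide the retraction data), and then write down the explicit two-sided inverse $A(\rho)\circ\psi_0^{-1}\circ B(\iota)$. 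As you note, this needs only the hypothesis at $[0]_*$, whereas the paper's argument invokes it at $[0]_*$ and $[1]_*$. What the paper's phrasing buys is the slightly stronger conceptual statement that $A_{-1}$ is \emph{the} limit of the restriction to $\bigtriangleup$ --- exactly the ``cosimplicial resolution'' picture motivating the section --- but for the bare content of the lemma your retract argument is sufficient and more economical.
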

\begin{proof}
We let $g \colon A_{-1} \to A_0$, $h \colon A_0 \to A_{-1}$ be the maps
induced by the unique maps in $\bigtriangleup_*$, $f,e \colon A_0 \to A_1$ the
maps induced by the maps $[0]_* \to [1]_*$ which send $0$ to $0$ resp. $1$, $k
\colon A_1 \to A_0$ the map induced by the map $[1]_* \to [0]_*$ sending $0$
to $0$ and $1$ to $*$. It is easily seen that these maps furnish a split equalizer.
Hence $A_{-1}$ is the limit of $A_\bullet |_\bigtriangleup$, likewise for
$B_\bullet$. The result follows.
\end{proof}

Let us fix a Landweber exact $\MGL$-module $\EE$
giving rise to a Landweber homology theory for the $\MU_*$-module
$M_*$. Let $\EE^\Top$ be the topological Landweber spectrum.

The cosimplicial resolution $\MGL^{\wedge \bullet} \wedge \EE$
of $\EE$ extends to a functor $\bigtriangleup_* \to \caD_{\MGL}$
using the $\MGL$-module structure on $\EE$.
The wedge $\MGL^{\wedge i} \wedge \EE$
is regarded as $\MGL$-module via the last factor.

We have natural maps
$$\pi_{2j} (\MU^{\wedge i} \wedge \EE^\Top) \to
\pi^\rig_j s_*(\MGL^{\wedge i} \wedge \EE)$$
which induce maps
$$\Sigma_T^j \HH \otimes \pi_{2j} (\MU^{\wedge i} \wedge \EE^\Top) \to
s_j(\MGL^{\wedge i} \wedge \EE)$$
which are unique up to possible phantoms.

These maps are also functorial in $i$ up to possible
phantom maps. More precisely we have a $\bigtriangleup_*$-diagram
$\Sigma_T^j \HH \otimes \pi_{2j} (\MU^{\wedge \bullet} \wedge \EE^\Top)$
in $\SH(S)$ modulo phantoms
and a transformation of $\bigtriangleup_*$-diagrams
$$\Sigma_T^j \HH \otimes \pi_{2j} (\MU^{\wedge \bullet} \wedge \EE^\Top) \to
s_j(\MGL^{\wedge \bullet} \wedge \EE),$$
again well defined up to possible phantoms.

This induces a transformation of diagrams of cohomology theories
defined on compact objects of $\SH(S)$
$$\Hom(-,\Sigma_T^j \HH \otimes \pi_{2j} (\MU^{\wedge \bullet}
\wedge \EE^\Top))=$$
$$\Hom(-,\Sigma_T^j \HH) \otimes \pi_{2j} (\MU^{\wedge \bullet}
\wedge \EE^\Top)
\to \Hom(-,s_j(\MGL^{\wedge \bullet} \wedge \EE)).$$
By Proposition \ref{spec-case} we
know that this is an isomorphism
on the subcategory of $\bigtriangleup_*$ spanned by the
objects $\{[0]_*, [1]_*, \ldots \}$.
By Lemma (\ref{cosimp-res})
it follows that it is also an isomorphism
on $[-1]_*$, which is Theorem (\ref{sl-thm}).

\begin{rem}
One can try to streamline the argument in the second step
by showing that $\HH$ can be realized as an $E_\infty$-algebra.
First note that $s_0$ can be obtained by colocalization along
all $\{\Sigma^{p,q} \Sigma^\infty_+ X | q \ge 0\}$
and then localization along the maps $\caS=\{\Sigma^{p,q} \Sigma^\infty_+ X
\to 0 | q > 0\}$. There is the problem that the colocalization
might not be cofibrantly generated, hence we cannot apply the
techniques available to persue the further localization.
Instead one looks at the full $\infty$-subcategory of the
$\infty$-category associated to the semimodel category of
$E_\infty$-ring spectra whose underlying objects are effective.
This is presentable in the sense of \cite{lurie-topoi}
an thus one should be able to find
a left proper combinatorial model. Then one can
directly localize this model category of effective $E_\infty$-ring
spectra along the free $E_\infty$-maps generated by $\caS$.
Alternatively one can try to localize the $\infty$-category directly.
A local model with respect to this localization yields
$\HH$ as an $E_\infty$-algebra under $\MGL$.

Having this one can form the derived category of $\HH$-modules
$\caD_\HH$ and using in the arguments of this paragraph
that a map between strictly $\HH$-cellular objects
in $\caD_\HH$ (with the definition of being strictly $\HH$-cellular
altered to be generated by $\HH$ inside $\caD_\HH$)
is an isomorphism if it is so on the $\pi_{i,0}$, $i \in \integers$.
\end{rem}

\section{Cohomological Landweber Exactness}

We start again with a topological evenly graded Landweber spectrum $\EE^\Top$
and let $M_*=\EE^\Top$ be the coefficients.
Let $\EE \in \caD_\MGL$ be the corresponding Landweber module.
It is well defined up to phantoms in $\caD_{\MGL,\caT}$.
We also denote by
$\EE$ the underlying spectrum in $\SH(S)_\caT$ with the $\MGL$-module structure
in $\SH(S)$.

\begin{lem}
 The functor $v \colon \caD_{\MGL,\caT} \to \SH(S)_\caT$ preserves phantom maps.
\end{lem}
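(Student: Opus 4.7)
The plan is to exploit the free-forgetful adjunction between the cellular subcategories. The functor $v$ admits a left adjoint $F := \MGL \wedge (-) \colon \SH(S)_\caT \to \caD_{\MGL,\caT}$, giving the natural bijection
\[
\Hom_{\caD_{\MGL,\caT}}(\MGL \wedge C,\, M) \;\cong\; \Hom_{\SH(S)_\caT}(C,\, v(M))
\]
for $C \in \SH(S)_\caT$ and $M \in \caD_{\MGL,\caT}$. That $F$ lands in the cellular subcategory is immediate from the fact that it sends the generators $\Sigma^{p,q} \one$ to $\Sigma^{p,q} \MGL$ and preserves homotopy colimits.

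The second step is to check that $F$ preserves compactness. The compact generators $\Sigma^{p,q} \one$ of $\SH(S)_\caT$ are sent to $\Sigma^{p,q} \MGL$, which are the compact generators of $\caD_{\MGL,\caT}$ (the unit $\MGL$ is compact in $\caD_{\MGL,\caT}$). Since any compact object of $\SH(S)_\caT$ is built from these by finitely many shifts, wedges, and cofibers, all of which are preserved by $F$, the functor $F$ carries compacts to compacts.

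Finally, suppose $f \colon M \to N$ is a phantom map in $\caD_{\MGL,\caT}$ and let $C \in \SH(S)_\caT$ be compact. Then $\MGL \wedge C \in \caD_{\MGL,\caT}$ is compact by the previous step, so $\Hom_{\caD_{\MGL,\caT}}(\MGL \wedge C, f) = 0$ by the phantom hypothesis. Applying the adjunction yields $\Hom_{\SH(S)_\caT}(C, v(f)) = 0$, and since $C$ was an arbitrary compact object of $\SH(S)_\caT$, the map $v(f)$ is phantom. The argument is formal once the adjunction is in place, so there is no serious obstacle; the only mild subtlety is verifying that the free-forgetful adjunction restricts correctly to the cellular subcategories, which follows from preservation of homotopy colimits and the explicit form of the generators.
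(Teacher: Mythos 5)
Your proof is correct and is essentially the paper's argument: the paper's one-line proof simply records the adjunction isomorphism $\Hom(X, vE) \cong \Hom(\MGL \wedge X, E)$ for $X$ compact, implicitly using that $\MGL \wedge X$ remains compact, which you have spelled out in full.
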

\begin{proof}
 For $X \in \SH(S)_{\caT,f}$ and $E \in \caD_{\MGL,\caT}$ we have $\Hom(X,v E) = \Hom(\MGL \wedge X,E)$.
\end{proof}

We want to exhibit a natural map

$$\alpha_{M_*,X} \colon \MGL^{**} X \otimes_{\MU^*} M^* \to \EE^{**} X$$

for any $X \in \SH(S)$. As usual $M^*=M_{-*}$.

Therefore let $a \in \MGL^{p,q} X$ and $b \in M^i$.
By smashing the map $a \colon \Sigma^{-p,-q} X \to \MGL$ with $\EE$ and applying
the module structure map we get a map $\Sigma^{-p,-q} X \wedge \EE \to \EE$.
Composing with $b \colon \one^{-2i,-i} \to \EE$ we get a map
$\Sigma^{-2i-p,-i-q} X \to \EE$. This defines the map $\alpha_{M_*,X}$.

Let $N_*$ be other Landweber coefficients and $M_* \to N_*$ a $\MU_*$-map.
Let $\FF$ be the motivic spectrum corresponding to $N_*$ derived from a $\MGL$-module
and $f \colon \EE \to \FF$ be a map of $\MGL$-modules in $\SH(S)$ corresponding to $M_* \to N_*$. It is unique
up to possible phantoms in $\SH(S)_\caT$.

From the definition of $\alpha_{M_*,X}$ and $\alpha_{N_*,X}$ it follows
that these maps are natural in $M_* \to N_*$ and $f$.

It follows that we get a transformation of $\bigtriangleup_*$-diagrams
$$\alpha_{(\EE^\Top \wedge \MU^{\wedge \bullet})_*,X} \colon \MGL^{**} X \otimes_{\MU^*} (\EE^\Top \wedge \MU^{\wedge \bullet})^*
\to (\EE \wedge \MGL^{\wedge \bullet})^{**} X.$$

\begin{lem}
 $\alpha_{(\EE^\Top \wedge \MU^{\wedge \bullet})_*,X}$ is an isomorphism for compact $X$ and $\bullet >0$.
\end{lem}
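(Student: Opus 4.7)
The plan is to transfer the projective-resolution strategy of Section \ref{first-sl} from slices to cohomology. The hypothesis $\bullet = n \ge 1$ plays the same structural role as the ``$\MGL \wedge \FF$-form'' did there, since $\EE \wedge \MGL^{\wedge n} \cong \MGL \wedge (\EE \wedge \MGL^{\wedge (n-1)})$ has a Landweber $\MGL$-module as second factor. Applying \cite[2.12, 2.16]{hov-strick.mor} to the $\MU_*$-module $N_* := (\EE^\Top \wedge \MU^{\wedge n})_*$ yields a two-term projective resolution
\begin{equation*}
  0 \to P_* \to Q_* \to N_* \to 0,
\end{equation*}
with $P_*, Q_*$ retracts of free $\MU_*$-modules.

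First I would lift this resolution, exactly as in the proof of Proposition \ref{spec-case}, to a cofiber triangle $\MM_P \to \MM_Q \to \CC$ in $\caD_{\MGL_\integers, \caT}$ whose associated homology theory is $h_{N_*}$, so that $\CC$ is identified with $(\EE \wedge \MGL^{\wedge n})_\integers$ by Brown representability; then base-change to $S$. The identification uses flatness of $N_*$ over $\MU_*$, itself a consequence of Landweber exactness.

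Next I would check $\alpha$ on the building blocks: for a free $\MU_*$-module $F_* = \bigoplus_i \MU_*(n_i)$ the corresponding $\MGL$-module is $\bigoplus_i \Sigma_T^{n_i} \MGL$ (up to phantoms in $\caD_{\MGL, \caT}$), and $\alpha$ becomes the natural comparison
\begin{equation*}
  \bigoplus_i \Sigma^{n_i} \MGL^{**} X \longrightarrow \prod_i \Sigma^{n_i} \MGL^{**} X.
\end{equation*}
This is a bidegree-wise isomorphism provided only finitely many shifts contribute to each fixed bidegree; the case of retracts then follows formally. To conclude, I would compare the tensored short exact sequence
\begin{equation*}
  0 \to \MGL^{**} X \otimes_{\MU^*} P^* \to \MGL^{**} X \otimes_{\MU^*} Q^* \to \MGL^{**} X \otimes_{\MU^*} N^* \to 0
\end{equation*}
(short exact by flatness of $N_*$) with the long exact cohomological sequence in the spectrum variable attached to the triangle $\MM_P \to \MM_Q \to \EE \wedge \MGL^{\wedge n}$. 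Isomorphism on the two outer terms together with top-row exactness forces the boundary to vanish, and the five lemma produces the desired isomorphism on the third term.

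The main obstacle I anticipate is the sum-vs-product step: it requires that the degrees $n_i$ in a Hovey--Strickland resolution of $N_*$ tend to infinity in any fixed bidegree (equivalently, a dual boundedness of $\MGL^{**} X$ for compact $X$ controlling the product). The phantom bookkeeping needed to lift $P_* \to Q_*$ to $\caD_{\MGL_\integers, \caT}$ and to base-change the resulting triangle to $S$ runs exactly parallel to Section \ref{first-sl} and should add no extra difficulty.
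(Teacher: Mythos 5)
Your overall strategy matches the paper's: take the Hovey--Strickland two-term projective resolution of $N_*$, lift it to a cofiber triangle of cellular $\MGL$-modules, verify $\alpha$ on the projective pieces, and then propagate to $N_*$ via the long exact sequence (you phrase it as a five-lemma argument, the paper phrases it as a cokernel computation; same content). The paper simply reduces to $\bullet=1$ first, which you avoid by resolving $(\EE^\Top \wedge \MU^{\wedge n})_*$ directly, but this is cosmetic.

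The one point where you go astray is the ``sum-vs-product'' concern you flag at the end, and it is worth seeing why it dissolves. For a free $\MU_*$-module $F_* = \bigoplus_i \MU_*(n_i)$ realized as $\bigoplus_i \Sigma_T^{n_i}\MGL$, the cohomology group $\bigl(\bigoplus_i \Sigma_T^{n_i}\MGL\bigr)^{**}(X) = \Hom_{\SH(S)}\bigl(X, \bigoplus_i \Sigma_T^{n_i}\MGL[\ast]\bigr)$ is a \emph{direct sum}, not a product, precisely because $X$ is assumed compact: $\Hom(X,-)$ preserves arbitrary coproducts. So the comparison map on the free (or projective) pieces is $\bigoplus_i \Sigma^{n_i}\MGL^{**}X \to \bigoplus_i \Sigma^{n_i}\MGL^{**}X$, tautologically an isomorphism, with no boundedness condition on the degrees $n_i$ needed. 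The compactness hypothesis in the lemma statement is doing exactly this work; there is no extra analytic input about the Hovey--Strickland resolution required. With that corrected, your proof is essentially the paper's proof.
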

\begin{proof}
 Clearly it is sufficient to prove the statement for $\bullet=1$. Let $N_*=(\EE^\Top \wedge \MU)_*$ be the
coefficients of $\EE \wedge \MGL$. Here we view $N_*$ as $\MU_*$-module via the last factor.
As already remarked $N_*$ is flat as $\MU_*$-module. This can be seen e.g. by considering $M_*$
as flat quasi coherent sheaf on the moduli stack of formal groups with trivialized constant vector fields.
Then $N_*$ is just the pullback of this sheaf to $\Spec(\MU_*)$.

Let
$$0 \to P_* \overset{\phi}{\to} Q_* \to N_* \to 0,$$
be a resolution by projective $\MU_*$-modules as in section (\ref{first-sl}).

Then
$$0 \to \MGL^{**} X \otimes_{\MU^*} P_* \to \MGL^{**} X \otimes_{\MU^*} Q_* \to
\MGL^{**} X \otimes_{\MU^*} N_*$$ is again exact by the flatness of $N_*$.
Moreover $\alpha_{P_*,\_}$, $\alpha_{Q_*,\_}$ are easily seen to be isomorphisms on compacts.
Thus the map induced by $\phi$ on the targets of these maps is injective on compacts.
Since this is part of the long exact cohomology sequence for the triangle corresponding to the resolution
we deduce that the target of $\alpha_{N_*,\_}$ is the cokernel of the above injection on compacts.
This proves the claim.
\end{proof}

\begin{cor}
 $\alpha_{M_*,\_}$ is an isomorphism between cohomology theories defined on compact objects.
\end{cor}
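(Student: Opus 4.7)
The plan is to apply Lemma \ref{cosimp-res} to the $\bigtriangleup_*$-diagram of cohomology theories on compact objects furnished by $\alpha_{(\EE^\Top \wedge \MU^{\wedge \bullet})_*,-}$. The topological resolution $\EE^\Top \wedge \MU^{\wedge \bullet}$ extends to a $\bigtriangleup_*$-diagram of $\MU$-module spectra with value $\EE^\Top$ at $[-1]_*$, using the $\MU$-module structure on $\EE^\Top$ to supply the extra codegeneracies; analogously $\EE \wedge \MGL^{\wedge \bullet}$ extends to a $\bigtriangleup_*$-diagram in $\caD_\MGL$ with value $\EE$ at $[-1]_*$, exactly as in Section \ref{cosimp-reso}. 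By the naturality of $\alpha_{N_*,X}$ in $N_*$ and in the associated map of $\MGL$-modules noted just before the preceding lemma, the maps $\alpha$ assemble into a transformation of $\bigtriangleup_*$-diagrams of cohomology theories on compact objects, and its value at $[-1]_*$ is precisely $\alpha_{M_*,-}$.

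By the preceding lemma, this transformation is an isomorphism at every object $[n]_*$ with $n \ge 0$. Applying Lemma \ref{cosimp-res} pointwise (for each fixed compact test object $X$ and each bidegree $(p,q)$, working in the category of abelian groups) we conclude that the transformation is also an isomorphism at $[-1]_*$, which is the statement of the corollary.

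The one point that deserves care is that the motivic Landweber modules $\EE \wedge \MGL^{\wedge n}$ and the comparison maps $f \colon \EE \to \FF$ used to realize $\MU_*$-module maps are only defined up to phantoms in $\caD_{\MGL,\caT}$. I expect this to be the main obstacle, but it is handled by two observations already in the paper: phantom maps vanish on cohomology evaluated on compact objects, and by the first lemma of this section the image of a phantom under $v \colon \caD_{\MGL,\caT} \to \SH(S)_\caT$ is again phantom, hence introduces no new ambiguity on the target $(\EE \wedge \MGL^{\wedge \bullet})^{**}X$ for compact $X$. Thus both the $\bigtriangleup_*$-diagram and the transformation $\alpha$ are genuinely well-defined on the nose once restricted to compact test objects, and Lemma \ref{cosimp-res} applies verbatim.
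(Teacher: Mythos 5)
Your proof is correct and takes essentially the same approach the paper intends: the corollary is obtained from the preceding lemma by applying Lemma \ref{cosimp-res} pointwise to the transformation of $\bigtriangleup_*$-diagrams of cohomology theories on compact objects, whose value at $[-1]_*$ is $\alpha_{M_*,\_}$. Your remark on the phantom ambiguity is also accurate and reflects precisely the role the paper assigns to the section's first lemma (that $v$ preserves phantoms).
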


We also deduce the following uniquness statement:

\begin{cor}
The phantom maps in $\SH(S)_\caT$ coming from $\caD_{\MGL_\integers,\caT}$ up to which
the Landweber spectrum $\EE$ is well-defined are also phantom in $\SH(S)$.
\end{cor}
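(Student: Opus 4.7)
The plan is to take a phantom map $F \colon \EE \to \EE'$ in $\caD_{\MGL_\integers,\caT}$ measuring the non-uniqueness of the Landweber construction and show that the resulting map $f^* vF$ (for $f \colon S \to \Spec(\integers)$) is phantom in all of $\SH(S)$, not merely in $\SH(S)_\caT$. The adjunction $\MGL \wedge (-) \dashv v$ used in the first lemma of this section identifies phantoms in $\caD_{\MGL,\caT}$ with phantoms in $\SH(S)_\caT$, so combining this with the phantom-preservation of $f^*$ in $\SH(-)_\caT$ from Section 4 we may assume from the outset that we are given a phantom $F$ in $\caD_{\MGL,\caT}$ over $S$ and must show that for every compact $X \in \SH(S)$ the induced map $vF_* \colon [X, v\EE]_{\SH(S)} \to [X, v\EE']_{\SH(S)}$ is zero.

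The first step is to observe that $vF$ kills coefficient maps. The adjunction above gives
$$[\MGL \wedge \one^{p,q}, \EE]_{\caD_\MGL} \;=\; [\one^{p,q}, v\EE]_{\SH(S)} \;=\; \pi_{p,q} v\EE,$$
and since $\MGL \wedge \one^{p,q}$ is a shift of $\MGL$, hence compact and cellular in $\caD_{\MGL,\caT}$, the phantom hypothesis on $F$ forces $vF \circ b = 0$ for every $b \colon \one^{p,q} \to v\EE$. In particular $vF \circ b = 0$ for each element of $M^i$ regarded as a map $b \colon \one^{-2i,-i} \to v\EE$.

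The main step feeds this vanishing through the explicit formula for $\alpha_{M_*,X}$. For a simple tensor $a \otimes b$ with $a \colon \Sigma^{-p,-q} X \to \MGL$ and $b \colon \one^{-2i,-i} \to v\EE$ the construction of the preceding subsection gives $\alpha_{M_*,X}(a \otimes b) = m_\EE \circ (a \wedge b)$, where $m_\EE \colon \MGL \wedge \EE \to \EE$ is the module structure. Because $F$ is a map of $\MGL$-modules, $vF \circ m_\EE = m_{\EE'} \circ (\mathrm{id}_\MGL \wedge vF)$, and therefore
$$vF \circ \alpha_{M_*,X}(a \otimes b) \;=\; m_{\EE'} \circ \bigl(a \wedge (vF \circ b)\bigr) \;=\; 0.$$
By bilinearity the composite $vF_* \circ \alpha_{M_*,X}$ vanishes on all of $\MGL^{**} X \otimes_{\MU^*} M^*$.

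The preceding Corollary states that $\alpha_{M_*,X}$ is an isomorphism for compact $X$, so this vanishing forces $vF_* = 0$ on $[X, v\EE]_{\SH(S)}$. Since $X$ was an arbitrary compact object of $\SH(S)$, $vF$ is phantom in $\SH(S)$ as required. The argument is essentially formal once the preceding Corollary is in hand; the one point deserving care is identifying the image of $M^*$ in $\pi_{**} v\EE$ with exactly the class of coefficient maps on which the phantom condition in $\caD_{\MGL,\caT}$ gives vanishing, and then combining this with the $\MGL$-linearity of $F$ to kill every element in the source of $\alpha_{M_*,X}$.
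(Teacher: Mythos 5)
The paper leaves this corollary without an explicit proof, so the task is to check whether your argument is the correct deduction from the material established in the section. It is: you reduce to a phantom $\MGL_S$-module map, use the explicit description $\alpha_{M_*,X}(a\otimes b)=m_\EE\circ(a\wedge b)$ together with $\MGL$-linearity of the phantom to see that it annihilates the image of $\alpha_{M_*,X}$, and then invoke the preceding corollary (surjectivity of $\alpha_{M_*,X}$ on compacts of $\SH(S)$) to conclude that the induced map on represented cohomology is zero on all compacts, i.e.\ the map is phantom in $\SH(S)$. This is exactly the intended consequence of the section's final corollary and your write-up supplies the steps the paper implicitly expects the reader to fill in.

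One small inaccuracy in your opening paragraph: the lemma of that section only shows that $v \colon \caD_{\MGL,\caT}\to\SH(S)_\caT$ \emph{preserves} phantoms, not that it ``identifies phantoms in $\caD_{\MGL,\caT}$ with phantoms in $\SH(S)_\caT$''. Fortunately your argument only uses the forward direction. Also, you do not actually need the pullback $f^*F$ to be phantom in $\caD_{\MGL_S,\caT}$; it suffices that $f^*(vF)$ is phantom in $\SH(S)_\caT$ (which follows from the lemma and the phantom-preservation of $f^*$ in Section~4), since the vanishing $f^*(vF)\circ b=0$ on coefficient maps $b\colon\one^{p,q}\to v\EE$ already follows from compactness of the Tate spheres in $\SH(S)_\caT$. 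With these cosmetic adjustments the proof is correct.
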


\bibliographystyle{plain}

\bibliography{slice}

\begin{thebibliography}{10}

\bibitem{christensen-strickland}
J.~Daniel Christensen and Neil~P. Strickland.
\newblock Phantom maps and homology theories.
\newblock {\em Topology}, 37(2):339--364, 1998.

\bibitem{hov-strick.mor}
Mark Hovey and Neil~P. Strickland.
\newblock Morava {$K$}-theories and localisation.
\newblock {\em Mem. Amer. Math. Soc.}, 139(666):viii+100, 1999.

\bibitem{krause}
Henning Krause.
\newblock Localization theory for triangulated categories.
\newblock arXiv:0806.1324.

\bibitem{levine-htp}
Marc Levine.
\newblock The homotopy coniveau tower.
\newblock {\em J. Topol.}, 1(1):217--267, 2008.

\bibitem{lurie-topoi}
Jacob Lurie.
\newblock {\em Higher Topos Theory}.
\newblock arXiv:math/0608040, Book in preparation.

\bibitem{NOS-land}
Niko Naumann, Paul~Arne {\O}stv{\ae}r, and Markus Spitzweck.
\newblock Motivic {L}andweber exactness.
\newblock Preprint, arXiv:0806.0274.

\bibitem{oestvaer-spitzweck}
Paul~Arne {\O}stv{\ae}r and Markus Spitzweck.
\newblock The {B}ott inverted infinite projective space is homotopy
  {$K$}-theory.
\newblock Preprint, http://folk.uio.no/paularne/.

\bibitem{PPR3}
Ivan Panin, Konstantin Pimenov, and Oliver R{\"o}ndigs.
\newblock On the relation of {V}oevodsky's algebraic cobordism to {Q}uillen's
  {$K$}-theory.
\newblock Preprint, arXiv:0709.4124.

\bibitem{PPR2}
Ivan Panin, Konstantin Pimenov, and Oliver R{\"o}ndigs.
\newblock A universality theorem for {V}oevodsky's algebraic cobordism
  spectrum.
\newblock Preprint, arXiv:0709.4116.

\bibitem{pelaez}
Pablo Pelaez.
\newblock Multiplicative properties of the slice filtration.
\newblock PhD thesis, arXiv 0806.1704.

\bibitem{spitzweck-rel}
Markus Spitzweck.
\newblock Relations between slices and quotients of the algebraic cobordism
  spectrum.
\newblock Preprint, arXiv:0812.0749.

\bibitem{voevodsky-zero-slice}
V.~Voevodsky.
\newblock On the zero slice of the sphere spectrum.
\newblock {\em Tr. Mat. Inst. Steklova}, 246(Algebr. Geom. Metody, Svyazi i
  Prilozh.):106--115, 2004.

\bibitem{voe-slice}
Vladimir Voevodsky.
\newblock Open problems in the motivic stable homotopy theory. {I}.
\newblock In {\em Motives, polylogarithms and Hodge theory, Part I (Irvine, CA,
  1998)}, volume~3 of {\em Int. Press Lect. Ser.}, pages 3--34. Int. Press,
  Somerville, MA, 2002.

\end{thebibliography}

\begin{center}
Fakult{\"a}t f{\"u}r Mathematik, Universit{\"a}t Regensburg, Germany.\\
e-mail: Markus.Spitzweck@mathematik.uni-regensburg.de
\end{center}

\end{document}